\newcommand{\calC}{{\mathcal{C}}}
\newcommand{\calD}{{\mathcal{D}}}
\newcommand{\calE}{{\mathcal{E}}}
\newcommand{\calF}{{\mathcal{F}}}
\newcommand{\calP}{{\mathcal{P}}}
\newcommand{\N}{{\mathbb{N}}} 
\newtheorem{theorem}{Theorem}[section]
\newtheorem{proposition}[theorem]{Proposition}%
\theoremstyle{definition}
\newtheorem{definition}[theorem]{Definition}
\theoremstyle{remark}
\numberwithin{equation}{section}
\begin{document}

\title{The pseudometric topology induced by upper asymptotic density}


\author{Jonathan M. Keith}
\address{School of Mathematics, Monash University, Wellington Rd, Clayton VIC 3800, Australia}
\email{jonathan.keith@monash.edu}

\subjclass[2010]{Primary 11B05; Secondary 54E50}

\date{18 May 2024}

\dedicatory{}

\commby{Nageswari Shanmugalingam}

\begin{abstract}
Upper asymptotic density induces a pseudometric on the power set of the natural numbers, with respect to which $\calP(\N)$ is complete. The collection $\calD$ of sets with asymptotic density is closed in this pseudometric, and closed subsets of $\calD$ are characterised by a generalisation of an additivity property (AP0). 
\end{abstract}

\maketitle

\section{Introduction}

The {\em asymptotic density} of a set $A \subseteq \N$ is defined to be
\[
\nu(A) := \lim_{n \rightarrow \infty} \frac{\lvert A \cap n \rvert}{n},
\]
if that limit exists. (The $n$ in the numerator represents the set $\{ 0, 1, \ldots, n-1 \}$, a common shorthand in the literature on asymptotic density, as for example in~\cite{blass2001}.) The collection of sets with asymptotic density is denoted~$\calD$. If the limit in the above definition is replaced by the limit inferior or limit superior, one obtains the {\em lower asymptotic density} $\nu^-(A)$ and {\em upper asymptotic density} $\nu^+(A)$, respectively. Both $\nu^-$ and $\nu^+$ extend $\nu$ to $\calP(\N)$. These set functions are important in several mathematical disciplines, including number theory (see, for example, their role in numerous theorems and proofs in~\cite{nathanson2000}). Key papers exploring the properties of asymptotic density (and its extensions) include~\cite{blass2001, blumlinger1996, buck1946, buck1953, kadane1995, kunisada2017, kunisada2019, kunisada2022, letavaj2015, maharam1976, mekler1984, polya1929, sleziak2008, vandouwen1992}. 

Asymptotic density is additive in the sense that for disjoint $A, B \in \calD$, the set $A \cup B$ is in $\calD$ with $\nu(A \cup B) = \nu(A) + \nu(B)$. Lower asymptotic density is superadditive ($\nu^-(A \cup B) \geq \nu^-(A) + \nu^-(B)$), whereas upper asymptotic density is subadditive ($\nu^+(A \cup B) \leq \nu^+(A) + \nu^+(B)$). Much of the literature cited above is concerned with additive extensions of $\nu$ to $\calP(\N)$. A highlight of this literature is the identification of necessary and sufficient conditions for the $L^p$ spaces induced by such an additive extension to be complete~\cite{blass2001, kunisada2017}.

This paper investigates the pseudometric topology induced on $\calD$ by the upper asymptotic density, via the pseudometric
\[
d(A, B) := \nu^+(A \triangle B),
\]
defined for all $A, B \in \calP(\N)$. Note that one does not obtain a pseudometric on $\calD$ if $\nu^+$ is replaced by $\nu$ in this definition, because $A \triangle B$ is not necessarily an element of $\calD$ for arbitrary $A, B \in \calD$. A pseudometric is obtained if $\nu^+$ is replaced by an additive extension $\overline{\nu}$ of $\nu$ to $\calP(\N)$, and it is possible (assuming the ultrafilter principle) to choose $\overline{\nu}$ so that $\calP(\N)$ is complete (this follows from Theorem~1 of~\cite{blass2001} and Theorem~1 of~\cite{gangopadhyay1999}). However, it is not known whether $\overline{\nu}$ can be chosen so that $\calD$ is also closed in the induced topology.
With $d$ defined as above, the following theorem is proved in Section~\ref{complete_section}.

\begin{theorem} \label{calF_complete}
$(\calP(\N), d)$ is complete and $\calD$ is closed.
\end{theorem}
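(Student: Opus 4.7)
The plan is to establish both halves by $L^1$-type arguments adapted to the pseudometric $d$. Note first that $d$ is indeed a pseudometric: symmetry is immediate, and the triangle inequality follows from $A \triangle C \subseteq (A \triangle B) \cup (B \triangle C)$ together with subadditivity of $\nu^+$.

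For completeness, let $(A_k)$ be Cauchy and extract a subsequence $(A_{k_j})$ with $d(A_{k_j}, A_{k_{j+1}}) < 2^{-j}$. Since each $\nu^+(A_{k_m} \triangle A_{k_{m+1}}) < 2^{-m}$ is defined as a $\limsup$, one can choose strictly increasing integers $n_1 < n_2 < \cdots$ such that
\[
\frac{\lvert (A_{k_m} \triangle A_{k_{m+1}}) \cap n \rvert}{n} < 2^{-m} \quad \text{for all } m \leq j \text{ and all } n \geq n_j.
\]
Construct the candidate limit $A$ by decreeing $A \cap [n_j, n_{j+1}) := A_{k_{j+1}} \cap [n_j, n_{j+1})$ (with arbitrary behaviour on $[0, n_1)$). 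For $n \in [n_i, n_{i+1})$ with $i \geq j$, the telescoping $A_{k_{l+1}} \triangle A_{k_j} \subseteq \bigcup_{m=j}^l (A_{k_m} \triangle A_{k_{m+1}})$ applied on each sub-window $[n_l, n_{l+1})$ with $j \leq l \leq i$ yields
\[
\lvert (A \triangle A_{k_j}) \cap n \rvert \leq n_j + \sum_{m=j}^i \lvert (A_{k_m} \triangle A_{k_{m+1}}) \cap n \rvert.
\]
Dividing by $n$ and sending $n \to \infty$ gives $\nu^+(A \triangle A_{k_j}) \leq 2^{-j+1}$, so $A_{k_j} \to A$; the Cauchy property then forces the whole sequence to converge to $A$.

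For closedness of $\calD$, the key point is that $\nu^-$ and $\nu^+$ are both $1$-Lipschitz with respect to $d$: the elementary bound $\lvert X \cap n \rvert \leq \lvert Y \cap n \rvert + \lvert (X \triangle Y) \cap n \rvert$, combined with the standard inequality $\liminf(a_n + b_n) \leq \liminf a_n + \limsup b_n$ (and its $\limsup$ analogue), yields $\lvert \nu^\pm(X) - \nu^\pm(Y) \rvert \leq d(X, Y)$. Consequently $\nu^+ - \nu^-$ is continuous on $(\calP(\N), d)$, and $\calD = \{A : (\nu^+ - \nu^-)(A) = 0\}$ is the preimage of a closed set.

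The main obstacle is the patching construction in the completeness proof: the $n_j$ must grow fast enough that the cumulative error from all blocks past the $j$-th is summable when measured on an arbitrary window $[0, n)$ with $n$ as large as desired, even though each individual bound $\nu^+(A_{k_m} \triangle A_{k_{m+1}}) < 2^{-m}$ is only an asymptotic (not uniform) estimate. Once the pseudometric and the Lipschitz estimate are in place, the closedness of $\calD$ follows with no further ideas.
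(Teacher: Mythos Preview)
Your proof is correct and follows the same patching/diagonalisation strategy as the paper: both build the limit by splicing $A_k \cap (n_{k-1}, n_k]$ along a rapidly growing sequence $(n_k)$ chosen so that the $\limsup$ density bounds become uniform on $[n_k,\infty)$, and both deduce closedness of $\calD$ from the $1$-Lipschitz continuity of $\nu^{\pm}$ with respect to $d$. The only cosmetic difference is that you first pass to a subsequence with $d(A_{k_j},A_{k_{j+1}})<2^{-j}$, which lets you sum a geometric series directly, whereas the paper keeps the original sequence and instead imposes the extra growth condition $n_{k-1}/n_k<\epsilon_k$ to control the contribution of earlier blocks.
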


Moreover, the closed subsets of $\calD$ can be characterised in terms of a property that generalises AP0, as defined by Mekler~\cite{mekler1984} and others. In the context of asymptotic density, AP0 can be stated as follows: for every increasing sequence $( A_i )_{i\in \N}$ in $\calD$, there is $A \in \calD$ such that $\nu(A_i \setminus A) = 0$ for all $i$, and $\nu(A) = \lim_{i \rightarrow \infty} \nu(A_i)$. An additive extension $\overline{\nu}$ of $\nu$ to $\calP(\N)$ has AP0 if the above statement holds with $\calP(\N)$ in place of $\calD$ and $\overline{\nu}$ in place of $\nu$. One reason AP0 is an important property is that it characterises those additive extensions of $\nu$ for which $\calP(\N)$ is complete with respect to the pseudometric $d_{\overline{\nu}}(A,B) := \overline{\nu}(A \triangle B)$ (see Theorem~1 of~\cite{gangopadhyay1999}). 

The generalisation of AP0 can be expressed in terms of quotients. Let $[A]$ be the equivalence class of $A \in \calP(\N)$, that is
\[
[A] := \{ B \in \calP(\N) : d(A,B)=0 \}.
\]
Define
\[
\calP(\N) / d := \{ [A] : A \in \calP(\N) \} \quad \mbox{ and } \quad \calD / d := \{ [A] : A \in \calD \}.
\]
Then $d([A],[B]) := d(A,B)$ defines a metric on $\calP(\N) / d$ or $\calD / d$. (In fact $\calP(\N) / d$ is also a Boolean quotient, since the null sets of $\nu^+$ form an ideal in $\calP(\N)$.) The following theorem is proved in Section~\ref{closed_section}.

\begin{theorem} \label{closed_limsup_liminf}
A set $\calE \subseteq \calD / d$ is closed if and only if the following condition holds: for every sequence $( A_i )_{i\in \N}$ in $\calE$ such that the nets 
\[
\{ \nu^+(\vee_{k = i}^j A_k) : i, j \in \N, i \leq j \}
\quad \mbox{ and } \quad
\{ \nu^-(\vee_{k = i}^j A_k) : i, j \in \N, i \leq j \}
\]
converge to a common limit $L$, the net $\{ \vee_{k = i}^j A_k : i, j \in \N, i \leq j \}$ converges to some $A \in \calE$ with $\nu^+(A) = L$.
\end{theorem}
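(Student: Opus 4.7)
The plan rests on three ingredients: completeness of $(\calP(\N), d)$ from Theorem~\ref{calF_complete}; the $1$-Lipschitz continuity of $\nu^+$ and $\nu^-$ with respect to $d$ (following from monotonicity and subadditivity of $\nu^+$, combined with $\nu^-(X) = 1 - \nu^+(\N \setminus X)$); and the inequality
\[
\nu^+(D \setminus C) \leq \nu^+(D) - \nu^-(C) \qquad (C \subseteq D),
\]
which follows from $\limsup(a_n + b_n) \geq \liminf a_n + \limsup b_n$ applied to the relative counting functions of $C$ and $D \setminus C$. Throughout, I interpret the directed set $\{(i,j) \in \N^2 : i \leq j\}$ with the product order.

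For the direction \emph{condition $\Rightarrow$ closed}, the plan is to take a convergent sequence $[A_i] \to [A]$ in $\calD/d$ with each $[A_i] \in \calE$, extract a subsequence satisfying $d(A_{n_k}, A) \leq 2^{-k}$, and set $B_{i,j} := \bigcup_{k=i}^j A_{n_k}$. The inclusion $B_{i,j} \triangle A \subseteq \bigcup_{k=i}^j (A_{n_k} \triangle A)$ then yields $d(B_{i,j}, A) \leq 2^{-i+1}$, so $B_{i,j} \to A$ in $d$. Lipschitz continuity of $\nu^\pm$, together with $\nu^+(A) = \nu^-(A) = \nu(A) =: L$, shows that this subsequence satisfies the hypothesis of the condition, producing a limit $A^* \in \calE$ of $(B_{i,j})$; uniqueness of limits in $\calD/d$ forces $[A] = [A^*] \in \calE$.

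For the direction \emph{closed $\Rightarrow$ condition}, write $B_{i,j} := \bigvee_{k=i}^j A_k$; the main step is to show that this net is Cauchy in $d$. Given $\epsilon > 0$, choose $(i_0, j_0)$ above which both $\nu^+(B_{\cdot,\cdot})$ and $\nu^-(B_{\cdot,\cdot})$ lie within $\epsilon$ of $L$. For any $(i,j)$ and $(i',j')$ in the net above $(i_0, j_0)$, the auxiliary pair $(I, J) := (\min(i,i'), \max(j, j'))$ is again in the net and above $(i_0, j_0)$, with $B_{i,j}, B_{i',j'} \subseteq B_{I,J}$. The key inequality then gives $\nu^+(B_{I,J} \setminus B_{i,j}) \leq \nu^+(B_{I,J}) - \nu^-(B_{i,j}) < 2\epsilon$ (and likewise for the primed pair), and the decomposition $B_{i,j} \triangle B_{i',j'} \subseteq (B_{I,J} \setminus B_{i,j}) \cup (B_{I,J} \setminus B_{i',j'})$ yields $d(B_{i,j}, B_{i',j'}) < 4\epsilon$. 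Completeness then supplies a limit $A \in \calP(\N)$; Lipschitz continuity forces $\nu^+(A) = \nu^-(A) = L$, so $A \in \calD$; and the diagonal subnet $B_{i,i} = A_i$ is cofinal, so $A_i \to A$ in $d$, whence closedness of $\calE$ in $\calD/d$ gives $[A] \in \calE$.

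The main obstacle I anticipate is the Cauchy estimate in the forward direction: the $\min$/$\max$ trick is what lets the hypotheses on $\nu^+$ and $\nu^-$ separately control the symmetric difference between arbitrary members of the net, bridged through the auxiliary set $B_{I,J}$ via the key inequality. The bookkeeping also requires verifying that $(I, J)$ stays inside the index set $\{(i,j) : i \leq j\}$, which follows from $\min(i, i') \leq \min(j, j') \leq \max(j, j')$.
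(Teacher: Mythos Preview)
Your argument is correct. The ``closed $\Rightarrow$ condition'' direction is essentially identical to the paper's: the paper proves the more general Theorem~\ref{complete_limsup_liminf}, and its forward implication uses exactly your $\min/\max$ trick together with the co-superadditivity inequality $\mu^+(B_{I,J}\setminus B_{i,j}) \le \mu^+(B_{I,J}) - \mu^-(B_{i,j})$ to obtain the Cauchy estimate $d(B_{i,j},B_{i',j'}) < 4\epsilon$, then invokes completeness and continuity of $\mu^\pm$ to identify the limit and its value.

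The ``condition $\Rightarrow$ closed'' direction, however, is genuinely more elementary than the paper's route. The paper derives Theorem~\ref{closed_limsup_liminf} from Theorem~\ref{complete_limsup_liminf}, whose converse direction must manufacture a complete ambient space from scratch: it passes to the Stone space $S$ of $\calF$, extends $\mu^+$ to a countably subadditive outer measure $\hat\mu^+$ on $\calP(S)$, and uses Proposition~\ref{countable_complete} to obtain a limit $B \in \overline{\phi(\calE)}$ against which the fast subsequence can be compared. You bypass all of this by invoking Theorem~\ref{calF_complete}: since $(\calD/d,d)$ is already complete, the limit $[A]$ of a convergent sequence in $\calE$ is handed to you, and it plays exactly the role of the Stone-space limit $B$ in the paper's argument. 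The subsequent steps---fast subsequence, $B_{i,j}\triangle A \subseteq \bigcup_k (A_{n_k}\triangle A)$, Lipschitz control of $\nu^\pm$, and uniqueness of limits---mirror the paper's, but with $[A]$ in place of $B$. Your approach thus trades generality (it is specific to $\nu^\pm$ on $\calP(\N)$, where completeness is already known) for a substantially shorter and Stone-free proof.
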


Here the directed set $\{ (i,j) : i,j \in \N, i \leq j \}$ has the product order: 
\[
(i,j) \leq (k,l) \iff i \leq k \mbox{ and } j \leq l.
\]
The operator $\vee$ represents the least upper bound in the partial order on $\calP(\N) / d$ given by 
\[
[A] \leq [B] \iff \nu^+(A \setminus B) = 0.
\]
The functions $\nu^+$ and $\nu^-$ are given by $\nu^+([A]) := \nu^+(A)$ and $\nu^-([A]) := \nu^-(A)$ respectively. These functions can be shown to be well defined. The property described in Theorem~\ref{closed_limsup_liminf} is equivalent to AP0 in the case $\calE = \calD / d$ and $( A_i )_{i\in \N}$ is increasing. 

The closed subsets of $\calD$ are then the inverse images under the quotient map $A \mapsto [A]$ of closed subsets of $\calD / d$. 

\section{Completeness of $(\calD, d)$} \label{complete_section}

Theorem~\ref{calF_complete} can be proved by a diagonalisation argument. Let $(B_k)_{k=1}^{\infty}$ be a Cauchy sequence in $(\calP(\N), d)$, and define $\epsilon_k := \nu^+(B_k \triangle B_{k+1})$ for each $k$, so that $\epsilon_k \rightarrow 0$. Without real loss of generality, it may be supposed $\epsilon_k > 0$ for all $k$, since either $(B_k)_{k=1}^{\infty}$ has a subsequence with this property, or if no such subsequence exists, the sequence trivially converges. Define a putative limit for $(B_k)_{k=1}^{\infty}$, of the form:
\[
B := \cup_{k=1}^{\infty} \left( B_k \cap (n_{k - 1}, n_k] \right),
\]
where $n_0 = -1$ and $n_k > n_{k - 1}$ for each $k$. Let $n_1$ be the smallest positive integer such that $\nu_n(B_1 \triangle B_2) < \nu^+(B_1 \triangle B_2) + \epsilon_1$ for all $n \geq n_1$, where 
\[
\nu_n(A) :=  \frac{\lvert A \cap n \rvert}{n}
\]
for all $A \in \calP(\N)$ and $n \in \N$. Iteratively define $n_k$ (for $k \geq 2$) to be the smallest positive integer such that:
\begin{enumerate}
\item $\nu_n(B_j \triangle B_{k+1}) < \nu^+(B_j \triangle B_{k+1}) + \epsilon_k$ for all $n \geq n_k$ and $j \leq k$, and
\item $\frac{n_{k - 1}}{n_k} < \epsilon_k$.
\end{enumerate}

Observe that 
\begin{equation}
| \nu_{n_k}(B_j \triangle B) - \nu_{n_k}(B_j \triangle B_k) | <  \epsilon_k
\end{equation} 
for $1 \leq j \leq k$. This is trivially true in the case $j = k = 1$, since $\nu_{n_1}(B_1 \triangle B) = \nu_{n_1}(B_1 \triangle B_1) = 0$. In the case $1 < j = k$, note 
\[
\nu_{n_k}(B_k \triangle B) = \frac{1}{n_k} n_{k-1} \nu_{n_{k-1}}(B_k \triangle B) < \epsilon_k
\]
and $\nu_{n_k}(B_k \triangle B_k) = 0$. In the case $j < k$, first observe that
\begin{align}
\nu_n(B_j \triangle B) & =  \frac{1}{n} \left( n_k \nu_{n_k}(B_j \triangle B) + n \nu_n(B_j \triangle B_{k+1}) - n_k \nu_{n_k}(B_j \triangle B_{k+1}) \right) \nonumber \\
& =  \frac{n_k}{n} \left( \nu_{n_k}(B_j \triangle B) - \nu_{n_k}(B_j \triangle B_{k+1}) \right) + \nu_n(B_j \triangle B_{k+1}),
\end{align}
for $1 \leq j \leq k$ and $n_k \leq n \leq n_{k + 1}$. Now put $n = n_{k+1}$ in (2.2) and note $| \nu_{n_k}(B_j \triangle B) - \nu_{n_k}(B_j \triangle B_{k+1}) | \leq 1$ and $n_k / n_{k + 1} < \epsilon_{k+1}$, thus obtaining 
\[
| \nu_{n_{k+1}}(B_j \triangle B) - \nu_{n_{k+1}}(B_j \triangle B_{k+1}) | <  \epsilon_{k+1}
\]
for $1 \leq j \leq k$. 

Another consequence of (2.2) is that $\nu_n(B_j \triangle B)  \leq \nu^+(B_j \triangle B_{k+1}) + 4 \epsilon_k$. To see this in the case $\nu_{n_k}(B_j \triangle B) - \nu_{n_k}(B_j \triangle B_{k+1}) \leq 0$, observe that (2.2) gives 
\[
\nu_n(B_j \triangle B) \leq \nu_n(B_j \triangle B_{k+1}) < \nu^+(B_j \triangle B_{k+1}) + \epsilon_k
\]
using the first criterion for selecting $n_k$. If $\nu_{n_k}(B_j \triangle B) - \nu_{n_k}(B_j \triangle B_{k+1}) > 0$, then (2.2) gives
\begin{align}
\nu_n(B_j \triangle B) & \leq  \left( \nu_{n_k}(B_j \triangle B) - \nu_{n_k}(B_j \triangle B_{k+1}) \right) + \nu_n(B_j \triangle B_{k+1}) \nonumber \\
& \leq  \left( \nu_{n_k}(B_j \triangle B) - \nu_{n_k}(B_j \triangle B_k) \right) + \nu_{n_k}(B_k \triangle B_{k + 1}) + \nu_n(B_j \triangle B_{k+1}) \nonumber \\
& <  \epsilon_k + \left( \nu^+(B_k \triangle B_{k+1}) + \epsilon_k \right) + \left( \nu^+(B_j \triangle B_{k+1}) + \epsilon_k  \right) \nonumber \\
& =  \nu^+(B_j \triangle B_{k+1}) + 4 \epsilon_k, \nonumber 
\end{align}
where the second line uses the triangle inequality $\nu_{n_k}(B_j \triangle B_k) \leq \nu_{n_k}(B_j \triangle B_{k+1}) + \nu_{n_k}(B_k \triangle B_{k + 1})$ and the third line uses (2.1) and the first criterion for selecting $n_k$. It follows that
\[
\sup _{n_k \leq n \leq n_{k+1}} \nu_n(B_j \triangle B) \leq \nu^+(B_j \triangle B_{k+1}) + 4 \epsilon_k.
\]
Taking the limit superior as $k \rightarrow \infty$ yields 
\[
\nu^+(B_j \triangle B) \leq \limsup_{k \rightarrow \infty} \nu^+(B_j \triangle B_{k+1})
\]
and thus
\[
\limsup_{j \rightarrow \infty} \nu^+(B_j \triangle B) \leq \limsup_{j \rightarrow \infty} \limsup_{k \rightarrow \infty} \nu^+(B_j \triangle B_{k+1}) = 0.
\]
Hence $d(B_j, B) \rightarrow 0$ as $j \rightarrow \infty$, and $(\calP(\N), d)$ is complete.

Since $\nu^+$ and $\nu^-$ are continuous, their coincidence set is closed (see Proposition~\ref{uniform_continuous} below and Proposition~6.9 of~\cite{james1987}). That is, $\calD$ is closed.

\section{Closed subsets of $\calD$} \label{closed_section}

A more general result than Theorem~\ref{closed_limsup_liminf} is proved in this section. Some new terminology and notation will facilitate the exposition. In what follows, $I_A$ represents the indicator function of a set $A$. A {\em field} is a family of sets containing the empty set $\emptyset$ and closed under complements and finite unions.

\begin{definition} \label{sub_and_super}
Consider a set $X$, a field $\calF \subseteq \calP(X)$, and set functions 
\[
\mu^+ : \calF \rightarrow [0,~1] \quad \mbox{ and } \quad \mu^-~:~\calF~\rightarrow~[0,~1]
\]
with $\mu^+(\emptyset) = \mu^-(\emptyset) = 0$ and $\mu^+(X) = \mu^-(X) = 1$.
\begin{enumerate}
\item $\mu^+$ is {\em subadditive} if 
\[
I_A \leq I_{B} + I_{C} \implies \mu^+(A) \leq \mu^+(B) + \mu^+(C)
\]
for $A, B, C \in \calF$.

\item $\mu^-$ is {\em superadditive} if 
\[
I_A \geq I_{B} + I_{C} \implies \mu^-(A) \geq \mu^-(B) + \mu^-(C)
\]
for $A, B, C \in \calF$.

\item $\mu^-$ is {\em co-subadditive with respect to $\mu^+$} if 
\[
I_A \leq I_{B} + I_{C} \implies \mu^-(A) \leq \mu^-(B) + \mu^+(C)
\]
for $A, B, C \in \calF$.

\item $\mu^+$ is {\em co-superadditive with respect to $\mu^-$} if 
\[
I_A \geq I_{B} + I_{C} \implies \mu^+(A) \geq \mu^+(B) + \mu^-(C)
\]
for $A, B, C \in \calF$.

\item The pair $(\mu^+, \mu^-)$ is {\em conjugate} if all four of the above properties hold.
\end{enumerate}
\end{definition}

By these definitions, subadditive and superadditive set functions are necessarily monotone, that is, 
\[
A \subseteq B \implies \mu^+(A) \leq \mu^+(B)
\]
for $A, B \in \calF$, and similarly for $\mu^-$.  

It will also be useful to say that a set function $\mu^+ : \calF \rightarrow [0, 1]$ is {\em countably subadditive} if 
\[
I_A \leq \sum_{k=1}^{\infty} I_{A_k} \implies \mu^+(A) \leq \sum_{k=1}^{\infty} \mu^+(A_k)
\]
for any $A, A_1, A_2, \ldots \in \calF$. 

Using the properties of the $\limsup$ and $\liminf$ operators, it is straightforward to verify that lower and upper asymptotic density form a conjugate pair that agree on $\calD$ (and only on $\calD$, where they also agree with $\nu$).

\begin{definition} \label{mu_plus_pseudometric}
Consider a set $X$, a field $\calF \subseteq \calP(X)$, and a subadditive function $\mu : \calF \rightarrow [0, 1]$. Define
\[
d_{\mu}(A, B) := \mu(A \triangle B)
\]
for all $A, B \in \calF$.
\end{definition}

It is straightforward to show that $d_{\mu}$ satisfies the axioms of a pseudometric. In particular, the triangle inequality follows from the subadditivity of $\mu$. 

The key new theorem in this section can now be stated.

\begin{theorem} \label{complete_limsup_liminf}
Consider a set $X$, a field $\calF \subseteq \calP(X)$, an arbitrary set $\calE \subseteq \calF$, and conjugate set functions $(\mu^+, \mu^-)$ defined on $\calF$ and agreeing on $\calE$. The pseudometric space $(\calE, d_{\mu^+})$ is complete if and only if the following condition holds: for every sequence $( A_i )_{i \in \N}$ in $\calE$ such that the nets 
\[
\{ \mu^+(\cup_{k = i}^j A_k) : i, j \in \N, i \leq j \}
\mbox{ and }
\{ \mu^-(\cup_{k = i}^j A_k) : i, j \in \N, i \leq j \}
\]
converge to a common limit $L$, the net $\{ \cup_{k = i}^j A_k : i, j \in \N, i \leq j \}$
converges to some $A \in \calE$ with $\mu^+(A) = L$.
\end{theorem}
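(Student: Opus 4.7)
My plan is to prove both directions from one shared inequality: if $A \in \calE$ with $A \subseteq W \in \calF$, then $\mu^+(W \setminus A) \leq \mu^+(W) - \mu^+(A)$. This follows from co-superadditivity applied to the decomposition $I_W = I_A + I_{W \setminus A}$, which gives $\mu^+(W) \geq \mu^+(W \setminus A) + \mu^-(A)$, combined with $\mu^-(A) = \mu^+(A)$ for $A \in \calE$. Throughout I abbreviate $U_{i,j} := \cup_{k=i}^{j} A_k$.

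For the ``if'' direction I would start with a Cauchy sequence $(B_k)$ in $(\calE, d_{\mu^+})$ and extract a subsequence $(A_i)$ with $\mu^+(A_i \triangle A_{i+1}) < 2^{-i-1}$. The symmetric-difference triangle inequality iterates to give $A_k \triangle A_i \subseteq \cup_{m=i}^{k-1}(A_m \triangle A_{m+1})$, so finite subadditivity yields $\mu^+(U_{i,j} \triangle A_i) = \mu^+(U_{i,j} \setminus A_i) < 2^{-i}$ for every $j \geq i$. Together with the bound $|\mu^-(U_{i,j}) - \mu^-(A_i)| \leq \mu^+(U_{i,j} \triangle A_i)$ from co-subadditivity, this shows both nets in the hypothesis converge to $L := \lim_i \mu^+(A_i)$. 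The hypothesis then provides $A \in \calE$ with $U_{i,j} \to A$ in $d_{\mu^+}$, after which $\mu^+(A_i \triangle A) \leq 2^{-i} + \mu^+(U_{i,j} \triangle A)$ forces $A_i \to A$ and hence $B_k \to A$.

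For the ``only if'' direction, the diagonal map $n \mapsto (n,n)$ is cofinal and monotone in the directed set, so the subnet $U_{n,n} = A_n$ inherits the limits: $\mu^+(A_n), \mu^-(A_n) \to L$. The crucial step is to show that $(A_i)$ itself is Cauchy in $d_{\mu^+}$. For $m \leq n$ and $W := U_{m,n}$, the shared inequality gives $\mu^+(W \setminus A_m) \leq \mu^+(W) - \mu^+(A_m)$, and both summands tend to $L$ as $m, n \to \infty$; the same bound applies with $n$ in place of $m$. Since $A_m, A_n \subseteq W$ yields $A_m \triangle A_n \subseteq (W \setminus A_m) \cup (W \setminus A_n)$, subadditivity forces $\mu^+(A_m \triangle A_n) \to 0$. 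Completeness then produces $A \in \calE$ with $A_i \to A$ and $\mu^+(A) = L$, and the bound $\mu^+(U_{i,j} \triangle A) \leq \mu^+(U_{i,j} \setminus A_i) + \mu^+(A_i \triangle A)$, whose first summand is again controlled by $\mu^+(U_{i,j}) - \mu^+(A_i) \to 0$, yields $U_{i,j} \to A$.

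The main obstacle is the Cauchy step in the ``only if'' direction. Subadditivity alone only gives the useless lower bound $\mu^+(W \setminus A) \geq \mu^+(W) - \mu^+(A)$; the upper bound required to conclude smallness comes from co-superadditivity and depends crucially on $A \in \calE$ so that $\mu^-(A) = \mu^+(A)$. This is the one place where both the conjugate pair axioms and the coincidence hypothesis enter essentially.
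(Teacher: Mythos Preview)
Your argument is correct in both directions. The ``only if'' half is essentially the paper's own proof: the paper also bounds $\mu^+(B_{ij}\triangle B_{kl})$ via co-superadditivity in the form $\mu^+(W\setminus A)\le \mu^+(W)-\mu^-(A)$, obtaining that the full net $\{U_{i,j}\}$ is Cauchy, whence the diagonal sequence $(A_i)$ converges in $\calE$ and the net follows. Your version isolates the diagonal first and then pushes the net along; the underlying inequality and its dependence on $\mu^-(A_i)=\mu^+(A_i)$ for $A_i\in\calE$ are identical.

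The ``if'' half, however, takes a genuinely different and considerably shorter route than the paper. The paper embeds $\calF$ into the clopen algebra of its Stone space $S$, constructs a countably subadditive outer extension $\hat\mu^+$ on $\calP(S)$, invokes Proposition~\ref{countable_complete} to get completeness of $(\calP(S),d_{\hat\mu^+})$, extends $\mu^-$ continuously to $\overline{\phi(\calE)}$, and only then produces a limit point $B$ to which the $\mu^\pm$-nets are shown to converge. You bypass all of this: extracting a subsequence with $\mu^+(A_i\triangle A_{i+1})<2^{-i-1}$ and using only \emph{finite} subadditivity gives $\mu^+(U_{i,j}\triangle A_i)<2^{-i}$ directly, which via (3.1) and (3.2) forces both nets to the common limit $L=\lim_i\mu^+(A_i)$ (this exists because $(A_i)$ is Cauchy and $\mu^\pm$ agree on $\calE$). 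The hypothesis then hands you $A\in\calE$ with $U_{i,j}\to A$, and since $A_i=U_{i,i}$ is a cofinal subnet you are done. What your approach buys is elementarity: no Stone duality, no countable subadditivity, no outer-measure construction. What the paper's approach buys is an explicit ambient complete space $(\calP(S),d_{\hat\mu^+})$ containing an isometric copy of $\calE$, which may be of independent interest even though it is not needed for the theorem as stated.
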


Here and in what follows, the notation $d_{\mu^+}$ is used to represent both a pseudometric on $\calF$ and its restriction to $\calE$. 

Theorem~\ref{closed_limsup_liminf} follows immediately from Theorem~\ref{complete_limsup_liminf}, since a subspace of $\calD$ is complete if and only if its image under the quotient map is complete. Theorem~\ref{closed_limsup_liminf} is framed in terms of the quotient space $\calD / d$ because only in a Hausdorff space is a complete subspace necessarily closed.

The proof of Theorem~\ref{complete_limsup_liminf} requires two propositions.

\begin{proposition} \label{countable_complete}
Consider a set $X$, a $\sigma$-field $\calF \subseteq \calP(X)$, and a countably subadditive set function $\mu : \calF \rightarrow [0, 1]$. The pseudometric space $(\calF, d_{\mu})$ is complete.
\end{proposition}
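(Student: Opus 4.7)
The plan is to prove completeness by the standard fast-subsequence construction, exploiting countable subadditivity in place of the $\sigma$-additivity that would be available for an actual measure. Let $(A_n)_{n \in \N}$ be a Cauchy sequence in $(\calF, d_\mu)$. First, I would pass to a subsequence $(A_{n_k})_{k \in \N}$ with $d_\mu(A_{n_k}, A_{n_{k+1}}) < 2^{-k}$, which is possible because the sequence is Cauchy. Set $B_k := A_{n_k} \triangle A_{n_{k+1}}$ and, for each $m$, define
\[
C_m := \bigcup_{k \geq m} B_k \in \calF.
\]
Since $I_{C_m} \leq \sum_{k \geq m} I_{B_k}$, countable subadditivity gives $\mu(C_m) \leq \sum_{k \geq m} \mu(B_k) < 2^{-m+1}$, so $\mu(C_m) \to 0$.

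Next I would exhibit a candidate limit. Because $\calF$ is a $\sigma$-field, the set
\[
A := \liminf_{k \to \infty} A_{n_k} = \bigcup_{m \in \N} \bigcap_{k \geq m} A_{n_k}
\]
belongs to $\calF$. The core set-theoretic step is to show $A \triangle A_{n_m} \subseteq C_m$. For the inclusion $A \setminus A_{n_m} \subseteq C_m$: if $x \in A \setminus A_{n_m}$, then $x \in A_{n_k}$ for all sufficiently large $k$, so for some $k \geq m$ we have $x \in A_{n_k} \triangle A_{n_m}$; but the telescoping identity $A_{n_k} \triangle A_{n_m} \subseteq \bigcup_{j=m}^{k-1} B_j \subseteq C_m$ places $x$ in $C_m$. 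For $A_{n_m} \setminus A \subseteq C_m$: if $x \in A_{n_m} \setminus A$, then $x \notin A_{n_k}$ for infinitely many $k$, so pick any such $k \geq m$ to get $x \in A_{n_m} \triangle A_{n_k} \subseteq C_m$ by the same telescoping argument.

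Finally I would use monotonicity of $\mu$, which itself follows from countable subadditivity (if $A \subseteq B$, then $I_A \leq I_B + I_\emptyset + I_\emptyset + \cdots$, whence $\mu(A) \leq \mu(B)$). This gives $d_\mu(A, A_{n_m}) = \mu(A \triangle A_{n_m}) \leq \mu(C_m) \to 0$, so the subsequence converges to $A$ in $d_\mu$. Since the original sequence is Cauchy, it too converges to $A$, establishing completeness.

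No step here looks genuinely difficult. The only pitfall to watch for is that we cannot use additivity or $\sigma$-additivity anywhere, so I must formulate every numerical bound purely via countable subadditivity (and its consequence, monotonicity), and every set identity at the level of indicator-function inequalities; the telescoping $A_{n_k} \triangle A_{n_m} \subseteq \bigcup_{j=m}^{k-1} B_j$ and the $\liminf$ description of $A$ are what make this go through with no stronger hypothesis on $\mu$.
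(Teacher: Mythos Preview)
Your argument is correct and follows essentially the same route as the paper: pass to a fast subsequence, take $A=\liminf_k A_{n_k}$, and bound $\mu(A\triangle A_{n_m})$ by the tail sum $\sum_{k\ge m}\mu(A_{n_k}\triangle A_{n_{k+1}})$ via countable subadditivity. Your write-up is in fact more careful than the paper's, which asserts the inequality $\mu(A_i\triangle A)\le\sum_{j\ge i}\mu(A_j\triangle A_{j+1})$ without spelling out the telescoping inclusion you supply.
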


\begin{proof}
Consider a Cauchy sequence $( A_i )_{i \in \N}$ in $\calF$. Without loss of generality, it may be assumed $\mu(A_i \triangle A_j) < 2^{-i}$ for $i \leq j$, since one can always choose a subsequence with this property, and a Cauchy sequence has the same limits as any convergent subsequence. Define $A := \cup_{i \in \N} \cap_{j = i}^{\infty} A_j \in \calF$. Then
\[
\mu(A_i \triangle A) \leq \sum_{j = i}^{\infty} \mu(A_j \triangle A_{j + 1}) < 2^{1-i},
\]
for each $i$, implying $A_i \rightarrow A$. Hence $(\calF, d_{\mu})$ is complete.
\end{proof}

\begin{proposition} \label{uniform_continuous}
Consider a set $X$, a field $\calF \subseteq \calP(X)$, a subadditive set function $\mu^+ : \calF \rightarrow [0, 1]$, and a set function $\mu^- : \calF \rightarrow [0, 1]$ that is co-subadditive with respect to $\mu^+$. Then $\mu^+$ and $\mu^-$ are uniformly continuous with respect to $d_{\mu^+}$.
\end{proposition}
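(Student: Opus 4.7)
The plan is to show that both $\mu^+$ and $\mu^-$ are in fact $1$-Lipschitz with respect to $d_{\mu^+}$, from which uniform continuity is immediate. The key observation is the pointwise inequality
\[
I_A \leq I_B + I_{A \triangle B}
\]
valid for any $A, B \in \calF$, since every element of $A$ lies in $B$ or in the symmetric difference $A \triangle B$. (Note $A \triangle B \in \calF$ because a field is closed under complements and finite unions.)

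Feeding this inequality into the subadditivity of $\mu^+$ yields $\mu^+(A) \leq \mu^+(B) + \mu^+(A \triangle B)$, and interchanging the roles of $A$ and $B$ gives the reverse bound. Thus
\[
|\mu^+(A) - \mu^+(B)| \leq \mu^+(A \triangle B) = d_{\mu^+}(A, B),
\]
so $\mu^+$ is $1$-Lipschitz. Applying the same inequality but using the hypothesis that $\mu^-$ is co-subadditive with respect to $\mu^+$ yields $\mu^-(A) \leq \mu^-(B) + \mu^+(A \triangle B)$, and symmetrising gives
\[
|\mu^-(A) - \mu^-(B)| \leq \mu^+(A \triangle B) = d_{\mu^+}(A, B),
\]
so $\mu^-$ is also $1$-Lipschitz. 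Both functions are therefore uniformly continuous with respect to $d_{\mu^+}$.

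There is no real obstacle here; the proof is a one-line consequence of the two Definition~\ref{sub_and_super} clauses once one writes down the correct indicator inequality. The only modest subtlety is remembering to invoke co-subadditivity (not mere subadditivity) for $\mu^-$, since the bound on the right-hand side must involve $\mu^+(A \triangle B)$ in order to control distances measured by $d_{\mu^+}$.
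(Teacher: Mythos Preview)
Your proof is correct and matches the paper's own argument: the paper derives the same two Lipschitz inequalities $\lvert \mu^{\pm}(A) - \mu^{\pm}(B) \rvert \leq \mu^+(A \triangle B)$ from subadditivity and co-subadditivity respectively, and concludes uniform continuity. You have simply made explicit the indicator inequality $I_A \leq I_B + I_{A \triangle B}$ that the paper leaves implicit.
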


\begin{proof}
It follows from subadditivity of $\mu^+$ that
\begin{equation}
\lvert \mu^+(A) - \mu^+(B) \rvert \leq \mu^+(A \triangle B)
\end{equation}
and from cosubadditivity of $\mu^-$ that
\begin{equation}
\lvert \mu^-(A) - \mu^-(B) \rvert \leq \mu^+(A \triangle B)
\end{equation}
for any $A, B \in \calF$. This implies uniform continuity of $\mu^+$ and $\mu^-$.
\end{proof}

Theorem~\ref{complete_limsup_liminf} can now be proved. First suppose $(\calE, d_{\mu^+})$ is complete, and consider $( A_i )_{i \in \N}$ in $\calE$ such that 
\[
\{ \mu^+(B_{ij}) : i, j \in \N, i \leq j \}
\mbox{ and }
\{ \mu^-(B_{ij}) : i, j \in \N, i \leq j \}
\]
converge to a common limit $L$, where $B_{ij} := \cup_{k = i}^j A_k$ for each $i$ and $j$. Given $\epsilon > 0$, there is $N \in \N$ such that $\lvert \mu^+(B_{ij}) - L \rvert < \epsilon$ and $\lvert \mu^-(B_{ij}) - L \rvert < \epsilon$ for all $i, j \in \N$ with $N \leq i \leq j$. Consider $i, j , k, l \in \N$ with $N \leq i \leq j$ and $N \leq k \leq l$. Define $m := \min \{ i, k \}$ and $M := \max \{ j, l \}$, and note $B_{ij} \subseteq B_{mM}$ and $B_{kl} \subseteq B_{mM}$. Then
\begin{eqnarray*}
\mu^+(B_{ij} \triangle B_{kl}) &\leq& \mu^+(B_{mM} \triangle B_{ij}) + \mu^+(B_{mM} \triangle B_{kl}) \\
&=& \mu^+(B_{mM} \setminus B_{ij}) + \mu^+(B_{mM} \setminus B_{kl}) \\
&\leq& \mu^+(B_{mM}) - \mu^-(B_{ij}) + \mu^+(B_{mM}) - \mu^-(B_{kl}) \\
&\leq& 2 \lvert \mu^+(B_{mM}) - L \rvert + \lvert \mu^-(B_{ij}) - L \rvert + \lvert \mu^-(B_{kl}) - L \rvert \\
&<& 4 \epsilon,
\end{eqnarray*}
where the third line uses the co-superadditivity of $\mu^+$. Hence $\{ B_{ij} : i, j \in \N, i \leq j \}$ is a Cauchy net containing the Cauchy sequence $( A_i )_{i \in \N}$, and both sequence and net converge to some $A \in \calE$. Moreover,
\[
\lvert \mu^+(A) - L \rvert \leq \lvert \mu^+(A_i) - \mu^+(A) \rvert + \lvert \mu^+(A_i) - L \rvert
\]
for any $i \in \N$. But $i$ can be chosen so that the summands on the right-hand side of the preceding inequality are arbitrarily small, since $\lvert \mu^+(A_i) - \mu^+(A) \rvert \leq \mu^+(A_i \triangle A)$. Hence $\mu^+(A) = L$.

For the converse, let $S$ be the Stone space of the Boolean algebra $\calF$ and let $\phi : \calF \rightarrow \calC$ be the canonical Boolean isomorphism mapping $\calF$ onto the field $\calC \subseteq \calP(S)$ comprised of the clopen (ie. simultaneously open and closed) subsets of $S$. (It is not necessary to be familiar with Stone spaces in order to understand what follows: it is sufficient to know that such an isomorphism exists for some compact topological space $S$.) The proof involves first defining a countably subadditive set function $\hat{\mu}^+ :\calP(S) \rightarrow [0,1]$ and a set function $\hat{\mu}^- : \overline{\calC} \rightarrow [0,1]$, where $\overline{\calC}$ is the closure of $\calC$ with respect to $d_{\hat{\mu}^+}$. The essential properties of these set functions are:
\begin{enumerate}
\item $\mu^+ = \hat{\mu}^+ \circ \phi$ and $\mu^- = \hat{\mu}^- \circ \phi$;

\item $\hat{\mu}^+$ agrees with $\hat{\mu}^-$ on $\overline{\phi(\calE)}$ (where $\phi(\calE)$ is the image of $\calE$ under $\phi$); and

\item $(\overline{\phi(\calE)}, d_{\hat{\mu}^+})$ is complete.
\end{enumerate}

For each $B \in \calP(S)$, define
\[
\hat{\mu}^+(B) := \inf \sum_{i \in \N} \mu^+(A_i),
\]
where the infimum is over sequences $( A_i )_{i \in \N}$ in $\calF$ such that $B \subseteq \cup_{i \in \N} \phi(A_i)$. (This infimum always exists because $B \subseteq \phi(X) = S$.) For $A \in \calF$, it is immediate from the definition that $\hat{\mu}^+(\phi(A)) \leq \mu^+(A)$. Moreover, any sequence $( A_i )_{i \in \N}$ in $\calF$ with $\phi(A) \subseteq \cup_{i \in \N} \phi(A_i)$ contains a finite subsequence $( A_{i_k} )_{k = 1}^n$ with $\phi(A) \subseteq \cup_{k = 1}^n \phi(A_{i_k})$, since $S$ is compact and elements of $\calC$ are both open and closed. Then $A \subseteq \cup_{k = 1}^n A_{i_k}$ and
\[
\mu^+(A) \leq \sum_{k=1}^n \mu^+(A_{i_k}) \leq  \sum_{i \in \N} \mu^+(A_i).
\]
Hence, $\mu^+(A) \leq \hat{\mu}^+(\phi(A))$ and $\mu^+ = \hat{\mu}^+ \circ \phi$.

Note $d_{\hat{\mu}^+}(A,B) = d_{\mu^+}(\phi^{-1}(A),\phi^{-1}(B))$ for $A, B \in \calC$. It follows that $\mu^- \circ \phi^{-1}$ is a uniformly continuous map from $\calC$ to $[0, 1]$, using Proposition~\ref{uniform_continuous}. Thus $\mu^- \circ \phi^{-1}$ has a uniformly continuous extension $\hat{\mu}^- : \overline{\calC} \rightarrow [0, 1]$ (see Proposition~12.13 of~\cite{james1987}). Then $\mu^- = \hat{\mu}^- \circ \phi$. Moreover, $\hat{\mu}^+(B) = \hat{\mu}^-(B)$ for any $B \in \overline{\phi(\calE)}$, since this is true for any $B \in \phi(\calE)$, and the coincidence set of $\hat{\mu}^+$ and $\hat{\mu}^-$ must be closed (see Proposition~6.9 of~\cite{james1987}).

Also note $\hat{\mu}^+$ is countably sub-additive, by an argument used in the proof of Lemma~III.5.5 of~\cite{dunford1958}. Thus $(\calP(S), d_{\hat{\mu}^+})$ is complete by Proposition~\ref{countable_complete}, hence $(\overline{\phi(\calE)}, d_{\hat{\mu}^+})$ is also complete.

Now consider a Cauchy sequence $( A_n )_{n \in \N} \subseteq \calE$. The sequence $( \phi(A_n) )_{n \in \N}$ is Cauchy in $\phi(\calE)$, and thus converges to some $B \in \overline{\phi(\calE)}$. Choose a subsequence $( \phi(A_{n_i}) )_{i \in \N}$ such that $\hat{\mu}^+(\phi(A_{n_i}) \triangle B) < 2^{-i}$ for all $i$. Then for any $i \leq j$,
\[
\hat{\mu}^+((\cup_{k = i}^j \phi(A_{n_k})) \triangle B) \leq \sum_{k = i}^j \hat{\mu}^+(\phi(A_{n_k}) \triangle B) < 2^{1-i}.
\]
Using the inequalities (3.1) and (3.2), it follows that the nets 
\[
\{ \hat{\mu}^+(\cup_{k = i}^j \phi(A_{n_k})) : i, j \in \N, i \leq j \}
\mbox{ and }
\{ \hat{\mu}^-(\phi(\cup_{k = i}^j A_{n_k})) : i, j \in \N, i \leq j \}
\]
converge to a common limit $L := \hat{\mu}^+(B) = \hat{\mu}^-(B)$. Since $\mu^+ = \hat{\mu}^+ \circ \phi$ and $\mu^- = \hat{\mu}^- \circ \phi$, these nets are precisely
\[
\{ \mu^+(\cup_{k = i}^j A_{n_k}) : i, j \in \N, i \leq j \}
\mbox{ and }
\{ \mu^-(\cup_{k = i}^j A_{n_k}) : i, j \in \N, i \leq j \}.
\]
By assumption, the net
\[
\{ \cup_{k = i}^j A_{n_k} : i, j \in \N, i \leq j \}
\]
converges to some $A \in \calE$, and in particular the sequence $( A_{n_i} )_{i \in \N}$ converges to~$A$. The original Cauchy sequence $( A_n )_{n \in \N}$ must therefore converge to $A$, implying $(\calE, d_{\mu^+})$ is complete. This completes the proof.

Note that one can replace the unions with intersections in Theorem~\ref{complete_limsup_liminf}, and the result remains true, with a similar proof. Thus Theorem~\ref{closed_limsup_liminf} also remains true if the least upper bounds are replaced by greatest lower bounds.

\section{Applications}

A reviewer of this paper invited speculation regarding potential applications of the above results ``in number theory or other parts of mathematics''. 
The author is currently investigating properties of the monotone integral induced by upper asymptotic density, defined in the manner of Greco~\cite{greco1977}, and in particular the relationship between that integral and the {\em Ces\`aro mean}, defined as 
\[
\lim_{n \rightarrow \infty} \frac{a_1 + \cdots + a_n}{n}
\]
for certain sequences of real numbers $(a_i)_{i\in \N}$. The author hopes the above results will facilitate that investigation, and subsequently lead to insights in the theory of ergodic processes, where Ces\`aro means play an important role.

Number theorists are interested in whether certain classes of natural numbers have asymptotic density. For example, the class of {\em abundant numbers}, consisting of natural numbers $n$ for which the sum of divisors is at least $2n$, is known to have asymptotic density (see Theorem~7.16 of~\cite{nathanson2000}), and moreover that asymptotic density has been estimated~\cite{deleglise1998}. Conceivably, Theorem~\ref{calF_complete} might facilitate proof that a given class of natural numbers has asymptotic density, by first expressing it as the limit of a sequence of classes with asymptotic density. Theorem~\ref{closed_limsup_liminf} may provide a method of estimating the asymptotic density of such a class, as the limit of a net of the form described in that theorem.

\section*{Acknowledgment}
The author is grateful to K.~P.~S.~Bhaskara Rao for helpful conversations regarding this manuscript.

\bibliographystyle{amsplain}
\bibliography{Keith_refs}

\end{document}